\documentclass[11pt,a4paper]{amsart}
\usepackage{graphicx,multirow,array,amsmath,amssymb}
\usepackage{xcolor}
\usepackage{longtable}
\usepackage{supertabular} 

\newtheorem{theorem}{Theorem}

\newtheorem{lemma}[theorem]{Lemma}
\newtheorem{corollary}[theorem]{Corollary}
\theoremstyle{definition}
\newtheorem{definition}{Definition}
\theoremstyle{remark}

\begin{document}
	
\title{Four-page index and linear upper bounds for ribbonlength}

\author[H. Yoo]{Hyungkee Yoo}
\address{Department of Mathematics Education, Sunchon National University, Suncheon 57922, Republic of Korea}
\email{hyungkee@scnu.ac.kr}

\keywords{ribbonlength, folded ribbon knot, knot diagram, four-page index}
\subjclass[2020]{57K10}


\begin{abstract}
We introduce the four-page index of a knot or link as a presentation invariant arising from embeddings in a four-page open book decomposition. 
Using spanning trees of the checkerboard graph of a reduced non-split diagram, we construct a Kauffman state consisting of a single state circle. 
The associated Eulerian tour of the underlying 4-valent plane graph determines a binding circle intersecting each edge exactly once, producing a four-page presentation with at most $2c(K)$ arcs. 
Hence
$$
\alpha_4(K) \le 2c(K),
$$
with strict inequality in the non-alternating case.

We further prove that ribbonlength is bounded above by the four-page index,
and therefore obtain the linear bound
$$
\mathrm{Rib}(K) \le 2c(K).
$$
This improves the previously known general linear upper bound for ribbonlength
and provides a diagrammatic method for estimating ribbonlength.
\end{abstract}

\maketitle

\section{Introduction} \label{sec:intro}

Ribbonlength, introduced by Kauffman~\cite{K}, measures the geometric complexity of a knot or link via flat folded ribbon realizations. 
Understanding its growth in terms of crossing number has been a central question in geometric knot theory.

A folded ribbon knot is obtained by folding a long and thin rectangular strip into a planar configuration whose core is a piecewise linear diagram representing the given knot type, as illustrated in Figure~\ref{fig:folded}. 
This construction provides a planar ribbon realization of knots and links.

\begin{figure}[h!]
	\includegraphics{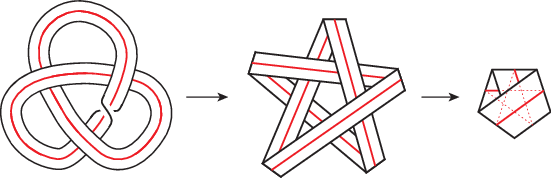}
	\caption{A folded ribbon knot of the trefoil knot}
	\label{fig:folded}
\end{figure}

Since the trivial knot admits folded ribbon realizations of arbitrarily small length, ribbonlength is defined using an infimum. 
The following formal definitions were given in~\cite{DHLM,DKTZ}.

\begin{definition}
Let $K$ be a knot. 
A {\it folded ribbon knot} $K_w$ of width $w>0$ satisfies:
\begin{enumerate}
\item The ribbon is flat and its fold lines are disjoint.
\item The core of $K_w$ is a finite union of straight line segments with crossing information representing the knot type of $K$.
\end{enumerate}
\end{definition}

\begin{definition}
Let $K$ be a knot or link and let $w>0$.
Then
$$
\mathrm{Rib}(K)
=
\inf_{K'_w \in [K]_w}
\frac{\mathrm{Len}(K'_w)}{w}
$$
is called the {\it ribbonlength} of $K$, where $[K]_w$ denotes the set of folded ribbon knots of width $w$ representing $K$.
\end{definition}

A knot or link diagram is a regular projection of a knot or link onto the plane equipped with over-under crossing information, and the crossing number $c(K)$ is the minimal number of crossings among all diagrams representing $K$.
Tian~\cite{T} established a quadratic upper bound for ribbonlength, and Denne~\cite{D} improved this to an order $c(K)^{3/2}$ estimate.
Subsequent work confirmed linear growth for various families of knots and links~\cite{DHLM,KNY1,KNY2}.

Diao and Kusner conjectured that ribbonlength admits asymptotic bounds of the form
$$
c_1 c(K)^{\alpha} \le \mathrm{Rib}(K) \le c_2 c(K)^{\beta}.
$$
They expected that $\alpha=1/2$ and $\beta=1$.
More recently, Denne and Patterson~\cite{DP} showed that the lower bound in this problem is constant.
In particular, Kim, No, and Yoo~\cite{KNY3} proved that for any knot or link $K$,
$$
\mathrm{Rib}(K) \le \frac{5}{2}c(K)+1,
$$
thereby establishing a general linear upper bound.

We approach this problem by connecting ribbonlength
to presentation invariants arising from
open book decompositions of $\mathbb{R}^3$.
An \emph{arc presentation} of a knot or link $K$ is an embedding of $K$
into finitely many pages of an open book decomposition of $\mathbb{R}^3$
so that each page contains exactly one properly embedded arc.
The minimal number of pages required is called the
\emph{arc index} $\alpha(K)$~\cite{Cr}.
A \emph{three-page presentation} of $K$ is an embedding of $K$
into three pages of an open book decomposition,
where each page may contain several pairwise disjoint arcs.
The minimal number of arcs in such a presentation
is called the \emph{three-page index} $\alpha_3(K)$~\cite{JLY}.

Building on these notions, we define the \emph{four-page index}
$\alpha_4(K)$ to be the minimal number of arcs
in a four-page presentation of $K$,
as illustrated in Figure~\ref{fig:four}.
The precise definition and basic properties of $\alpha_4(K)$
are given in Section~\ref{sec:four}.

\begin{figure}[h!]
	\includegraphics{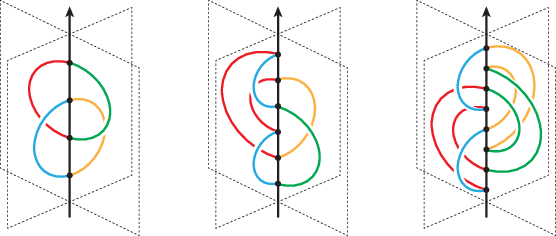}
	\caption{Four-page presentations of Hopf link, trefoil knot, and figure eight knot}
	\label{fig:four}
\end{figure}

The purpose of this paper is to present a new diagrammatic approach
that yields a sharper linear estimate.
Our method is based on a structural connection between knot diagrams,
Kauffman state, and presentation invariants.
Starting from a non-split link diagram,
we associate a suitable Kauffman state
that leads naturally to a four-page presentation.

Our first result establishes a structural relationship between the four-page index and ribbonlength.

\begin{theorem}\label{thm:rib_alpha4}
For any knot or link $K$,
$$
\mathrm{Rib}(K) \le \alpha_4(K).
$$
\end{theorem}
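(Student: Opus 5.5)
The plan is to convert a minimal four-page presentation with $n=\alpha_4(K)$ arcs directly into a folded ribbon of width $w$ whose core has length at most $(1+\varepsilon)nw$ for every $\varepsilon>0$. Letting $\varepsilon\to0$ then gives $\mathrm{Rib}(K)\le \alpha_4(K)$.

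\textbf{Step 1: a planar picture of the presentation.} I will use the precise definition of four-page presentations from Section~\ref{sec:four}. Straighten the binding to the $x$-axis and label the four pages $P_1,P_2,P_3,P_4$ in cyclic order. Project $P_1$ and $P_2$ onto the upper half-plane and $P_3$ and $P_4$ onto the lower half-plane. Since the pages are cyclically ordered around the binding, this produces a diagram of $K$ with the following crossing rules:
\begin{itemize}
\item every arc of $P_1$ lies over every arc of $P_2$ where they cross;
\item similarly, arcs of $P_4$ lie over arcs of $P_3$ (or the reverse, depending on the chosen viewpoint);
\item arcs within a single page are disjoint nested or side-by-side semicircles;
\item there are no crossings between the upper and lower half-planes.
\end{itemize}
Each arc joins two binding points, and consecutive arcs of the knot meet at binding points, alternating sides or staying on one side.

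\textbf{Step 2: replace each arc by a short folded piece.} An arc of $P_i$ with endpoints $a<b$ on the axis becomes a piecewise-linear core. It rises (or descends, for the lower pages) vertically from $a$ to height $w/2$, runs horizontally to $b$, and returns vertically to the axis. At the two corners the ribbon is folded by $45^\circ$ fold lines, which turn the strip by $90^\circ$.

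Next compress the axis horizontally, so that all binding points lie in an interval of length $\delta$. Then each arc has core length $w+O(\delta)$, and the total core length is $nw+O(n\delta)$. Arcs in the same half-plane now overlap almost completely, and this overlap is permitted in a flat folded ribbon provided the layering is specified consistently. I will choose the layering as follows:
\begin{itemize}
\item $P_1$ is above $P_2$ in the upper half, matching the crossing information;
\item within a page, nested arcs are stacked inner-below-outer (or any fixed order compatible with disjointness);
\item $P_4$ is above $P_3$ in the lower half, again matching the crossing information.
\end{itemize}
At a binding point where the knot passes from an upper arc to a lower arc, the core simply continues straight through the axis with no fold. Where it passes between two arcs on the same side, a $180^\circ$ fold turns the core back, at essentially no extra length.

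\textbf{Step 3 (main obstacle): admissibility and knot type.} It remains to check two things.
\begin{itemize}
\item The fold lines are disjoint and the ribbon is flat, so that the construction is a genuine folded ribbon knot. I expect this to follow because the folds occur at distinct heights $\pm w/2$ or at distinct binding points separated by $\delta/n$.
\item The layering defined above is globally consistent, and the resulting ribbon represents $K$, that is, its core with the induced crossing information is isotopic to the diagram from Step~1.
\end{itemize}
For the second point, I would argue that shrinking $\delta$ and flattening the heights to $w/2$ is a planar isotopy that preserves every over/under relation between arcs. The reason is that the stacking order was chosen to agree with the page order, and any two arcs from different half-planes are disjoint.

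The delicate part is the neighbourhood of a binding point where several folds stack. There I would follow the local fold models of \cite{DKTZ,KNY3} to confirm that the ribbon's layer order near the fold agrees with the cyclic page order around the binding. Once this is done, we get $\mathrm{Len}/w\le n+O(n\delta/w)$, and letting $\delta\to0$ yields the theorem.
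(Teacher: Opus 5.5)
There is a genuine gap: Step 2 does not produce an admissible folded ribbon, and it fails exactly where you wrote ``I expect''. Take an upper arc with endpoints $a<b$, so $b-a\le\delta$ after compression. The corner fold at $(a,h)$ must reflect the direction $(0,1)$ to $(1,0)$, so its fold line is $\{(a+t,\,h+t): |t|\le w/2\}$. The corner fold at $(b,h)$ must reflect $(1,0)$ to $(0,-1)$, so its fold line is $\{(b+s,\,h-s): |s|\le w/2\}$. These two segments meet at $\bigl(\tfrac{a+b}{2},\,h+\tfrac{b-a}{2}\bigr)$ whenever $b-a\le w$. They are the two ends of the same horizontal ribbon segment, so this is a real crossing of fold lines on the ribbon, not an overlap of different layers, and condition (1) of the definition fails. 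In general, two consecutive $90^\circ$ folds turning in the same sense need a segment longer than $w$ between them. The spacing $\delta/n$ of the binding points does not help, because each $45^\circ$ fold line has horizontal extent $w$. If you lengthen each horizontal piece beyond $w$, every arc costs about $2w$, and your argument gives only $\mathrm{Rib}(K)\le 2\alpha_4(K)$. Separately, the layering in Step 3 is only asserted. Your justification, that arcs from different half-planes are disjoint, holds for cores but not for ribbons: after compression, the bands around the vertical segments overlap the horizontal bands of both half-planes.

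The missing idea is the direction of compression. The paper compresses along the binding axis, so the binding direction becomes the stacking direction of the layers rather than a direction in the plane of the ribbon. Each binding point becomes one unit square of ribbon. The square is left flat if its two arcs lie in opposite pages, and folded along a diagonal (a $90^\circ$ turn) if they lie in adjacent pages. The four pages correspond to the four sides of the square. Each arc becomes a connection of negligible length on the corresponding side, in effect a $180^\circ$ fold. Because the arcs within a page are disjoint, these folds nest compatibly with the layer order given by position along the binding. Length is then counted as one unit per binding point rather than $w$ per arc, and no arc ever needs two same-sense $90^\circ$ folds close together.
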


Since $\alpha_4(K) \le \alpha_3(K)$ and it is known that
$\alpha_3(K)$ grows linearly with the crossing number~\cite{Y3},
it follows formally that $\alpha_4(K)$ also admits
a linear upper bound.
However, such an indirect argument does not provide
a sharp coefficient.
Our main diagrammatic result is the following.

\begin{theorem}\label{thm:alpha4_bound}
For any non-split, nontrivial knot or link $K$,
$$
\alpha_4(K) \le 2c(K).
$$
Moreover, if $K$ is non-alternating, then the inequality is strict.
\end{theorem}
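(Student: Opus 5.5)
We follow the strategy sketched in the abstract. Start from a reduced, non-split diagram $D$ of $K$ with $c=c(K)$ crossings; for non-alternating $K$ we take $D$ to be minimal as well. Non-splitness and nontriviality give $c\ge 2$, so the underlying projection is a connected 4-valent plane graph $G$ with $c$ vertices and $2c$ edges.

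\textbf{Step 1: a single-circle state.} Checkerboard color the faces of $D$ and form the checkerboard (Tait) graph $\Gamma$. Its vertices are the shaded faces and its edges are the crossings. Choose a spanning tree $T$ of $\Gamma$. At each crossing, smooth so that the shaded regions are joined across edges of $T$ and separated across edges not in $T$. This choice of $A$- or $B$-smoothing is made crossing by crossing, so it is a Kauffman state $s$. By the standard Thistlethwaite argument, the number of state circles equals the number of boundary components of a regular neighborhood of $T$ in the plane, and that number is $1$ because $T$ is a tree. So $s$ has exactly one state circle $C$. Traversing $C$ gives an Eulerian tour of $G$ that never crosses itself at a vertex: at each vertex it turns rather than going straight.

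\textbf{Step 2: the binding circle.} Push $C$ slightly off itself toward one side to obtain a simple closed curve $\beta$ in $S^2$. Near each crossing, $C$ runs along two of the four corners. We want $\beta$ to meet each of the $2c$ edges of $G$ transversally exactly once and to avoid the vertices. The natural choice is to let $\beta$ follow $C$ and cross each edge once, in the middle, switching sides there. One must check that the side switches along the tour are consistent and that $\beta$ is embedded; this is where the planarity of the non-crossing tour is used. Take $\beta$ as the binding axis. It cuts $S^2$ into two disks, and $D$ meets each disk in $c$ crossing neighborhoods connected by arcs. Each edge of $G$ is split by $\beta$ into two half-edges on opposite sides.

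Each component of $K\setminus\beta$ is then one half-edge, a crossing strand, and another half-edge. It crosses $\beta$ only at its endpoints, so there are exactly $2c$ arcs. Now lift to an open book with four pages: two pages on each side of $\beta$, one for over-strands and one for under-strands. At every crossing the over-arc and under-arc sit on distinct pages on the same side, and arcs on a common page are disjoint by planarity, since the non-crossing strands of the diagram do not meet. This gives a four-page presentation with $2c$ arcs, so $\alpha_4(K)\le 2c(K)$. The main obstacle is Step 2: proving that an embedded $\beta$ meeting every edge exactly once exists. I would argue it via the ``turning'' structure of $s$ at each vertex, using the fact that the two strands through a crossing lie in opposite complementary disks after perturbation.

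\textbf{Step 3: strictness.} For non-alternating $K$, a minimal diagram is non-alternating, so some edge $e$ joins two over-passes (or two under-passes). Choose $\beta$ so that it crosses $e$ with both adjacent crossing strands on the same page type. The two arcs ending at that crossing point of $\beta$ then lie on pages that can be merged by isotoping through the binding, reducing the count to at most $2c-1$. Making this merge precise is the delicate part. Alternatively, one can invoke a known move that reduces the number of arcs when two consecutive arcs lie on the same page.
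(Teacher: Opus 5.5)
There is a genuine gap in Step~2, and it is more than a consistency check: the curve you ask for does not exist in general. If a simple closed curve $\beta$ meets an edge $e=uw$ of $G(D)$ transversally in exactly one point, the Jordan curve theorem puts $u$ and $w$ on opposite sides of $\beta$. So a $\beta$ that splits every edge into half-edges on opposite sides would properly $2$-colour the vertices of $G(D)$. For the standard trefoil diagram $G(D)$ contains the odd cycle $a\,b\,c$, so no such $\beta$ exists.

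What the spanning-tree state actually gives is the following. At a crossing in $T$ the state circle turns through the unshaded corners, and at a crossing not in $T$ through the shaded ones. Hence the curve through the midpoints $m(e)$ crosses $e=uw$ transversally only when exactly one of $u,w$ lies in $T$. When both or neither do, it merely touches $e$ at $m(e)$, and both endpoint crossings lie on the same side. For the trefoil, two of the six edges are always of this kind, whatever colouring and tree you pick. The count of $2c$ binding points survives. But your reason that the two arcs meeting at a binding point lie on different pages, namely that they lie on opposite sides of $\beta$, fails at these tangential points.

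At a tangential point the two arcs lie on the same side, and each contains exactly one crossing. So they lie on distinct pages exactly when one is an over-arc and the other an under-arc, i.e.\ when the strand alternates along $e$. This is Condition~(3) of Definition~\ref{def:binding}. The construction is therefore valid as it stands for alternating diagrams. For non-alternating ones it must be repaired by deleting the binding point on each non-alternating edge whose endpoints lie on the same side, which merges two arcs of the same type (Lemma~\ref{lem:local_fix_nonalt}).

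This deletion is also the real mechanism for strictness, and it shows that Step~3 cannot work as you wrote it. Where $\beta$ crosses $e$ transversally, the two adjacent arcs are on opposite sides and can never share a page. You need to choose $T$ so that both endpoint crossings of some non-alternating edge are in $T$, or both are outside $T$. Then $\beta$ is tangent there, and that binding point can be removed, leaving at most $2c-1$ arcs.

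Two smaller slips: the two strands at a crossing must lie on the same side of $\beta$, not in opposite disks as your last sentence of Step~2 says. And each disk contains only part of the $c$ crossings, not $c$ crossing neighborhoods each.
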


In particular, the bound $\alpha_4(K) \le 2c(K)$ is attained for the Hopf link and the trefoil knot.
Together with Theorem~\ref{thm:rib_alpha4},
this immediately implies the following corollary.

\begin{corollary}
For any non-split, nontrivial knot or link $K$,
$$
\mathrm{Rib}(K) \le 2c(K)
$$
with strict inequality in the non-alternating case.
\end{corollary}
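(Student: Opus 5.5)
The corollary follows by chaining Theorem~\ref{thm:rib_alpha4} with Theorem~\ref{thm:alpha4_bound}. Let $K$ be a non-split, nontrivial knot or link. Theorem~\ref{thm:rib_alpha4} holds for every knot or link, so $\mathrm{Rib}(K)\le \alpha_4(K)$. Since $K$ is non-split and nontrivial, Theorem~\ref{thm:alpha4_bound} applies and gives $\alpha_4(K)\le 2c(K)$. Combining the two yields $\mathrm{Rib}(K)\le 2c(K)$.

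For the strict inequality, suppose in addition that $K$ is non-alternating. Then the second part of Theorem~\ref{thm:alpha4_bound} gives $\alpha_4(K)<2c(K)$. Together with $\mathrm{Rib}(K)\le\alpha_4(K)$ this gives
$$
\mathrm{Rib}(K)\le \alpha_4(K)<2c(K).
$$
A weak inequality followed by a strict one yields a strict inequality, so no further argument is needed.

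There is one point to check. Ribbonlength is defined as an infimum, and Theorem~\ref{thm:rib_alpha4} is stated as an inequality on that infimum, not on an attained value. The chain therefore involves only real numbers and is valid as it stands. Both ingredients are assumed from the statements, so I see no genuine obstacle here. The real difficulty lies in the two theorems: realizing a four-page presentation as a folded ribbon of length at most one unit per arc, and building a single-circle Kauffman state from a spanning tree of the checkerboard graph. Those are proved separately and are not needed for this deduction.
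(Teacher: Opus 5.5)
Your proposal is correct and is exactly the paper's argument: the corollary is obtained by chaining $\mathrm{Rib}(K)\le\alpha_4(K)$ from Theorem~\ref{thm:rib_alpha4} with $\alpha_4(K)\le 2c(K)$ from Theorem~\ref{thm:alpha4_bound}, and the strict case follows from the strict version of the second inequality for non-alternating $K$. The paper presents it as an immediate consequence of the two theorems with no further argument, just as you do.
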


The paper is organized as follows.
In Section~\ref{sec:four}, we introduce four-page presentations and define the four-page index $\alpha_4(K)$, establishing its basic properties.
In Section~\ref{sec:ribbon}, we prove that ribbonlength is bounded above by the four-page index.
Section~\ref{sec:diagram} develops a diagrammatic construction of four-page presentations from knot diagrams and establishes the linear bound $\alpha_4(K) \le 2c(K)$.
Finally, in Section~\ref{sec:circular}, we study circular four-page presentations and refine the inequality in the non-alternating case.

\section{Four-page presentations and the four-page index}
\label{sec:four}

Structured embeddings of links in $\mathbb{R}^3$ provide a combinatorial framework for studying topological complexity.
A classical example is the arc presentation introduced by Cromwell~\cite{Cr},
in which a knot or link is embedded into finitely many pages of an open book decomposition of $\mathbb{R}^3$.
The minimal number of pages required for such a representation is called the \emph{arc index} $\alpha(K)$,
which has been extensively studied in relation to crossing number, bridge number, and other invariants~\cite{BP,Cr2,CrNut,Mat,MB,Ng}.

A different viewpoint was proposed by Dynnikov~\cite{Dyn}, who introduced
\emph{three-page presentations}.
In this setting, the ambient space is restricted to three pages of an open book,
and each page may contain several mutually disjoint arcs.
Dynnikov showed that every link admits a three-page presentation, and later studies have examined this type of presentation
for various classes of knots, links and spatial graphs~\cite{Dyn2, Dyn3, JLY, Kur, Kur2, KV, Y}.

Recently, variants of three-page presentations have also been used
to study ribbonlength.
For an oriented link, one may consider
\emph{rotated three-page presentations},
where the orientation induces a cyclic order
in which the link traverses the three pages.
This construction was introduced to derive ribbonlength estimates
by passing from knot diagrams to presentation data
and subsequently to folded ribbon realizations~\cite{Y2}.
These results indicate that presentation invariants
may capture geometric information relevant to ribbonlength.
Motivated by this line of research, we introduce an analogous notion based on four pages.

\begin{definition}
A \emph{four-page presentation} of a knot or link $K$ is a presentation of $K$
in an open book decomposition of $\mathbb{R}^3$ satisfying the following conditions:
\begin{enumerate}
\item Exactly four pages meet $K$.
\item Each page contains finitely many pairwise disjoint properly embedded arcs.
\end{enumerate}
\end{definition}

Dynnikov proved that every knot or link admits a three-page presentation.
Starting from such a presentation, one may obtain a four-page presentation
as follows.
Choose an arc lying in one of the three pages and push an interior point
of that arc to the binding axis through a planar isotopy supported entirely
within the same page.
This operation splits the arc into two arcs meeting the binding axis at the new point,
thereby increasing the total number of arcs by one.
One of these arcs may then be moved into an initially empty fourth page.
Hence every knot or link admits a four-page presentation.

As in the three-page setting, the number of intersection points with the binding axis
is equal to the number of arcs in the presentation.

\begin{definition}
The \emph{four-page index} $\alpha_4(K)$ of a knot or link $K$
is defined as the minimal number of arcs
among all four-page presentations of $K$.
\end{definition}

The four-page index is closely related to the arc index and the three-page index.

First, let $K$ be a knot or link admitting a four-page presentation with $m$ arcs.
By a small perturbation, one may redistribute the arcs so that each arc lies
in a distinct page of a sufficiently large open-book decomposition.
Thus, a four-page presentation with $m$ arcs induces an arc presentation
with $m$ arcs.
Taking the minimum over all four-page presentations yields
$$
\alpha(K) \le \alpha_4(K),
$$
where $\alpha(K)$ denotes the arc index.

On the other hand, let $K$ be a nontrivial knot or link admitting a three-page presentation.
In any such presentation, the number of arcs on each page
is bounded below by the bridge number of $K$, as shown in~\cite{JLY}.
Since the bridge index of a non-trivial link is at least two,
every three-page presentation contains at least two arcs on each page.
Therefore, one may move a single arc from one of the pages
into an additional fourth page without increasing the total number of arcs.
This produces a four-page presentation with the same number of arcs, and hence
$$
\alpha_4(K) \le \alpha_3(K).
$$

\section{From four-page presentations to ribbonlength}
\label{sec:ribbon}

In this section we prove Theorem~\ref{thm:rib_alpha4}.
The key observation is that a four-page presentation
admits a direct geometric realization as a folded ribbon knot.
We now describe this construction.

\begin{proof}[Proof of Theorem~\ref{thm:rib_alpha4}]
Let $K$ be a knot or link admitting a four-page presentation with
$m$ arcs.
The number of arcs coincides with the number of intersection points
with the binding axis.
We construct a unit-width folded ribbon realization of $K$
whose core length can be made arbitrarily close to $m$.

Consider a four-page presentation of $K$.
At each binding point, exactly two arcs of the presentation meet.
Since the four pages are arranged cyclically around the binding axis,
the two adjacent arcs at a binding point are supported either in adjacent pages
or in opposite pages.
In the former case the two arcs meet at a right angle,
while in the latter case they are collinear and form a straight angle.

To each binding point we associate a square ribbon piece of side length $1$.
If the two adjacent arcs lie in opposite pages,
the square is left flat so that the ribbon continues straight,
realizing a $180^\circ$ turn.
If the two arcs lie in adjacent pages,
we fold the square along one of its diagonals,
thereby realizing a $90^\circ$ turn of the ribbon,
as illustrated in Figure~\ref{fig:folding}.

\begin{figure}[h!]
	\includegraphics{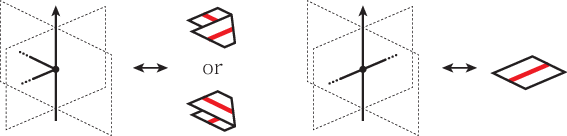}
	\caption{Local configurations near a binding point and the corresponding ribbon pieces}
	\label{fig:folding}
\end{figure}

The square pieces are placed along the binding axis
with pairwise disjoint interiors.
Since the ribbon is considered as a two–dimensional object,
these pieces may be compressed along the direction of the binding axis,
so that their heights become arbitrarily small.
In this way the entire configuration can be realized in a single plane.

The pieces are then connected within each page
according to the combinatorial structure of the four-page presentation.
Because arcs in each page are pairwise disjoint,
these connections can be performed without introducing additional intersections,
and the fold lines remain disjoint.

The resulting object is a folded ribbon knot representing $K$.
Each binding point contributes one square of side length $1$,
and the extra length introduced by connections
can be made arbitrarily small.
Hence, for every $\varepsilon>0$,
$$
\mathrm{Rib}(K) \le m+\varepsilon.
$$
Since ribbonlength is defined as an infimum over all folded ribbon realizations,
we conclude that
$$
\mathrm{Rib}(K) \le m.
$$
Taking the minimum over all four-page presentations yields
$$
\mathrm{Rib}(K) \le \alpha_4(K),
$$
which completes the proof.
\end{proof}

\section{Circular four-page presentations}
\label{sec:circular}

In this section we introduce a planar description of four-page presentations via binding circles.
In a previous work~\cite{Y3}, a method was introduced to construct three-page presentations directly from knot diagrams by means of suitable binding circles arising from spanning structures of the underlying plane graph.
We adapt and refine this construction to obtain binding circles suitable for circular four-page presentations.
This planar viewpoint provides a convenient diagrammatic method for constructing four-page presentations from link diagrams.

We consider the one-point compactification of $\mathbb{R}^3$
equipped with an open-book structure with four pages.
Under this compactification, the binding axis becomes an unknotted circle,
called the \emph{binding circle}, and each page becomes a disk
bounded by this circle as drawn in Figure~\ref{fig:circular}.
Thus a four-page presentation is completely determined
by the intersection of the link with the binding circle
together with the assignment of the resulting arcs to the four pages.

\begin{figure}[h!]
	\includegraphics{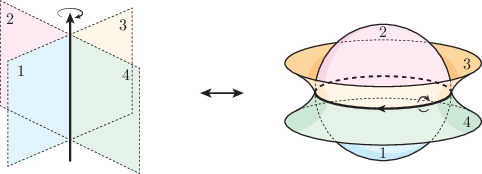}
	\caption{A four-page presentation and its circular four-page presentation with an ordering of the pages}
	\label{fig:circular}
\end{figure}

This viewpoint admits a purely planar description.
Fix a simple closed curve $\gamma$ in the plane and regard it
as the binding circle.
We place two pages inside $\gamma$ and two pages outside $\gamma$.
In contrast to the three-page case, crossings can occur both inside
and outside $\gamma$, since two distinct pages are assigned to each
side of the binding circle.
With a suitable local over-under convention distinguishing the two
pages on each side of $\gamma$, the arcs are determined combinatorially
from the planar diagram.
Conversely, any configuration satisfying the conditions below
determines a four-page presentation of the link.

\begin{definition}~\label{def:binding}
Let $D$ be a knot or link diagram.
A simple closed curve $\gamma$ in the plane is called a
\emph{binding circle for a circular four-page presentation} of $D$
if the following conditions hold:
\begin{enumerate}
    \item The curve $\gamma$ intersects $D$ in finitely many points.
    \item Each arc of $D$ cut by $\gamma$ is of one of the following four types:
        \begin{enumerate}
            \item it lies inside $\gamma$ and every crossing on the arc is an over-crossing;
            \item it lies inside $\gamma$ and every crossing on the arc is an under-crossing;
            \item it lies outside $\gamma$ and every crossing on the arc is an over-crossing;
            \item it lies outside $\gamma$ and every crossing on the arc is an under-crossing.
        \end{enumerate}
    \item No two arcs of the same type are adjacent along $\gamma$.
\end{enumerate}
\end{definition}

An intersection point of $\gamma$ with $D$ is called a
\emph{binding point}.
The number of binding points is equal to the number of arcs
in the associated four-page presentation.

Since the number of binding points equals the number of arcs,
it suffices to construct, for a diagram $D$,
a binding circle $\gamma$ and count its intersections with $D$.
Accordingly, we regard $D$ as a 4-valent plane graph
and construct a binding circle that meets each edge
of the underlying graph at most once.
This yields a combinatorial bound on the number of arcs.

\section{Four-page indices from knot diagrams}
\label{sec:diagram}

In this section we prove Theorem~\ref{thm:alpha4_bound}.
Suppose first that $K$ is a split link whose non-split components
each satisfy Theorem~\ref{thm:alpha4_bound}.
Since the crossing number is additive under split union,
and four-page presentations of split components may be arranged
independently in disjoint regions of the open book,
the desired inequality follows by applying the argument
to each non-split component separately.

However, a trivial split component has crossing number zero,
while any four-page presentation of it requires at least two arcs.
Thus the inequality need not hold in the presence of trivial split components.
Accordingly, it suffices to treat the case where $K$ is non-split and nontrivial.

A crossing of a knot diagram $D$ is called \emph{nugatory} 
if there exists a simple closed curve in the plane 
that meets $D$ transversely in exactly one point 
and separates the crossing from the rest of the diagram.
A diagram is said to be \emph{reduced} 
if it contains no nugatory crossings.

Since nugatory crossings do not affect the knot type and
a minimal diagram contains no nugatory crossings,
we may assume that $D$ is reduced.
This assumption simplifies the combinatorial arguments 
in the subsequent construction.

Let $D$ be a non-split reduced diagram of a nontrivial knot or link $K$ with $c(D)$ crossings.
Ignoring the crossing information, $D$ may be regarded as a connected 4-valent plane graph.
Write $G(D)$ for the underlying 4-valent plane graph of $D$.
Since $G(D)$ is 4-valent, we have
$$
|E(G(D))| = 2|V(G(D))| = 2c(D).
$$
Our goal is to construct a binding circle $\gamma$
for a circular four-page presentation of $D$
such that $\gamma$ meets each edge of $G(D)$ exactly once.
This immediately implies that the number of binding points is $|E(G(D))|=2c(D)$,
and hence $\alpha_4(K)\le 2c(K)$.

Fix a checkerboard coloring of the regions of $D$.
Let $\Gamma(D)$ be the graph whose vertices are the shaded regions
and whose edges correspond to crossings of $D$.
Since $D$ is non-split, $\Gamma(D)$ is connected.
Choose a spanning tree $T$ of $\Gamma(D)$.

We now define a Kauffman state $S_T$ determined by the spanning tree $T$.
At each crossing of $D$, there are two possible smoothings~\cite{K2}.
We choose the smoothing so that the resulting state
encodes whether the corresponding dual edge lies in $T$ or in its complement.
More precisely, each crossing of $D$ corresponds to a unique edge of $\Gamma(D)$,
and we select the smoothing at that crossing according to membership in $T$.
Denote by $S_T$ the state obtained in this way.

\begin{lemma}\label{lem:singlecircle}
The state $S_T$ has exactly one state circle.
\end{lemma}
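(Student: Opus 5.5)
The standard argument goes through the relation between state surfaces and the checkerboard graph. I will make the smoothing rule precise: at a crossing whose dual edge $e$ lies in $T$, smooth so that the two shaded corners at that crossing are joined, which merges the two shaded regions across the crossing. At a crossing whose edge lies in $\Gamma(D)\setminus T$, smooth the other way, so that the shaded regions stay separated there and the two unshaded corners are joined instead. With this convention the state circles of $S_T$ are exactly the boundary components of a regular neighbourhood of a certain planar surface. Equivalently, they are the boundary of the union of the shaded regions, thickened and glued along small bands at the crossings whose edges lie in $T$.

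Concretely, let $F_T$ be the union of the closed shaded regions, each shrunk slightly, together with one small band (a disk) at each crossing corresponding to an edge of $T$. After these choices, $\partial F_T$ coincides with the collection of state circles of $S_T$, up to isotopy in the plane. I will verify this locally at each crossing: near a tree crossing the boundary of the band follows the smoothing that connects the shaded corners, and near a non-tree crossing the two shaded pieces remain disjoint, matching the opposite smoothing. Away from crossings the boundary of a shaded region runs along the edges of $D$. So the number of state circles equals the number of boundary components of $F_T$.

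Now $F_T$ is a planar surface built as a thickening of the graph $T$: each vertex of $T$ (a shaded region) contributes a disk and each edge contributes a band. There is one point to check: a shaded region of a connected diagram is an open disk in $S^2$. This holds because $D$ is connected, so every complementary region in $S^2$ is simply connected. Since $T$ is a tree, the resulting surface is a regular neighbourhood of a tree embedded in the plane, hence a disk, and so has exactly one boundary component. Connectivity of $T$ gives a single component, and acyclicity excludes extra boundary circles: by Euler characteristic, $\chi(F_T)=|V(T)|-|E(T)|=1$, and a connected planar surface with $\chi=1$ is a disk. Therefore $S_T$ has exactly one state circle.

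The main obstacle is the local identification of the state circles with $\partial F_T$, since the paper only fixes the smoothing rule loosely. I would make the convention explicit with the "shaded corners joined" picture and check it at both crossing types. The boundary of the disk containing $\infty$ needs a brief remark, which is handled by working in $S^2$.
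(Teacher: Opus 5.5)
Your proposal is correct and takes essentially the same approach as the paper: both identify the state circle of $S_T$ with the boundary of a regular neighbourhood of the spanning tree $T$ embedded in $S^2$, realised as shaded disks joined by bands at the crossings of $T$. You also supply what the paper's proof leaves implicit, namely the precise smoothing convention (shaded corners joined exactly at tree crossings; the opposite convention gives two circles already for the trefoil), the local check at both crossing types, the fact that shaded regions are disks because $D$ is connected, and the count $\chi(F_T)=|V(T)|-|E(T)|=1$.
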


\begin{proof}
The chosen smoothings merge state circles along the edges of $T$
and prevent additional components from forming along edges outside $T$.
Equivalently, the resulting state circle may be identified with the boundary
of a regular neighborhood of a suitable spanning subcomplex of the induced
cell decomposition of $S^2$ determined by $T$.
Hence $S_T$ has exactly one component.
\end{proof}

Let $C_T$ denote the unique state circle of $S_T$.
Since $C_T$ runs along each edge of the underlying graph $G(D)$ exactly once,
it determines an Eulerian tour
$$
E_T = (e_1, \dots, e_{2c(D)})
$$
of $G(D)$.

For each edge $e$ of $G(D)$, choose a point in the interior of $e$,
for instance its midpoint, and denote it by $m(e)$.
Following the Eulerian tour $E_T$, we connect the successive midpoints
$m(e_i)$ and $m(e_{i+1})$
by embedded arcs drawn in the interior of the face of $G(D)$
where the two consecutive edges $e_i$ and $e_{i+1}$ meet.
In this way, the resulting curve remains disjoint from $G(D)$
except at the set of points $\{m(e) \,| \, e \in E(G(D))\}$.
Since $E_T$ is a cyclic sequence,
these arcs connect consecutively and return to $m(e_1)$,
so the construction produces a closed curve.
We denote this closed curve by $\gamma'$.

\begin{lemma}\label{lem:simple}
The curve $\gamma'$ may be chosen to be simple.
\end{lemma}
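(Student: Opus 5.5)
We show that $\gamma'$ is simple by comparing it with the state circle $C_T$ of Lemma~\ref{lem:singlecircle}, which is a simple closed curve in the plane. The point is that $\gamma'$ is, up to isotopy, a pushed-off copy of $C_T$, and a small push-off of a simple curve is simple.

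First, we make $C_T$ concrete near the diagram. Away from small disks around the crossings, $C_T$ coincides with the edges of $G(D)$. Inside each crossing disk, $C_T$ consists of two disjoint smoothing arcs. Each smoothing arc joins two consecutive edges $e_i,e_{i+1}$ of $E_T$ and turns through a corner of a single face of $G(D)$, namely the face in which $\gamma'$ joins $m(e_i)$ to $m(e_{i+1})$. Next, we slide $C_T$ slightly off $G(D)$. Along each edge $e$ we let the curve cross $e$ transversally exactly once, at $m(e)$. On the half of $e$ before $m(e)$ we push it into one adjacent face, and on the half after $m(e)$ into the other adjacent face. We choose the sides so that near each crossing the pushed curve lies in the face containing the corresponding smoothing arc. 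Because the smoothing arc turns into a definite corner, this choice is forced and consistent at both ends of every edge. This realizes each connecting arc of $\gamma'$ as a path from $m(e_i)$ to $m(e_{i+1})$ inside the correct face, obtained by isotopy of $C_T$ supported in a thin neighborhood of $G(D)$.

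It remains to check that different pieces do not cross. Within one face $F$, $\gamma'$ consists of corner-cutting arcs, one per occurrence of a consecutive pair in $E_T$ turning through a corner of $F$. Each corner of $F$ is a crossing, and at each crossing only one of the two smoothing arcs lies in $F$. So every corner of $F$ is used at most once, and each arc can be drawn inside a small neighborhood of its corner joining the two midpoints on the adjacent boundary edges. Two such corner arcs could meet only if they shared a midpoint $m(e)$ from the same side. But at $m(e)$ the curve passes from one face into the other, so on each side of $e$ exactly one arc ends at $m(e)$. The arcs inside $F$ therefore have pairwise distinct endpoints on $\partial F$. They can be chosen as nested-free corner arcs near $\partial F$, which are disjoint. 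Arcs in different faces are disjoint because faces are disjoint. Together with transversality at the midpoints, this makes $\gamma'$ a simple closed curve.

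The main obstacle is the side-consistency at the midpoints. One must verify that the path really crosses each edge exactly once, rather than returning to the same side. This follows from the local structure of a smoothing: at a crossing, the two edges joined by a smoothing arc bound a common face, and the curve enters that face from the edge $e_i$ and leaves it through the edge $e_{i+1}$. Checking this at both endpoints of every edge, including the case of an edge joining a crossing to itself, where the disk picture must be read carefully, is the step I expect to require the most care.
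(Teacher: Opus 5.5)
Your overall route matches the paper's terse proof, which only says to choose the arcs in each face close to its boundary. You draw each connecting arc as a corner-cutting arc near the corner hugged by the corresponding smoothing arc, note that each corner of a face is used at most once, and keep the arcs near $\partial F$. However, the step you single out as the crux is false: $\gamma'$ does not in general cross every edge transversally. At a crossing, both smoothing arcs hug corners of the same checkerboard color. The smoothing that merges the shaded regions hugs the unshaded corners, and vice versa. Which smoothing is used is decided by whether the crossing's edge of $\Gamma(D)$ lies in $T$. So take an edge $e=uv$ of $G(D)$ whose endpoint crossings are both in $T$, or both outside $T$. Its corner arcs at $u$ and at $v$ lie in the same face, and $\gamma'$ touches $e$ at $m(e)$ from that face and returns to it.

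This is unavoidable. A simple closed curve meeting every edge exactly once transversally would put the two ends of every edge on opposite sides, i.e.\ it would $2$-color $G(D)$. But for the standard trefoil diagram, $G(D)$ has an odd cycle (three crossings pairwise joined). These tangential points are exactly where the link passes between the two pages on the same side of the binding circle, which the four-page setting permits. Consequently, your ``forced and consistent'' choice (one half of each edge pushed into each adjacent face) cannot always be made. Your claim that the arcs inside a face have pairwise distinct endpoints is also wrong: the arcs at the two corners $c_j,c_{j+1}$ of $F$ bounding an edge $f_j$ may both end at $m(f_j)$. Your local-smoothing justification does not help, because leaving $F$ through $e_{i+1}$ does not prevent the next arc from lying in $F$ again.

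The conclusion survives if you handle that case differently. The tour $E_T$ passes through each edge exactly once, so exactly two connecting arcs of $\gamma'$ end at each $m(f)$, and they are consecutive along $\gamma'$. They may be on the same side of $f$ or on opposite sides. Corner arcs at distinct corners, drawn near their corners, are therefore disjoint, with one exception: arcs at adjacent corners of a face may share the midpoint of the edge between them. Such a shared point is a single passage of $\gamma'$ through $m(f)$, not a double point. Replace the transversality claim and the push-off description by this argument.
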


\begin{proof}
Each connecting arc lies in the interior of a face of $G(D)$.
Although several arcs may lie in the same face,
they can be arranged to be pairwise disjoint
by choosing them sufficiently close to the boundary of the face
and by performing small perturbations if necessary.
These adjustments do not alter the intersection pattern with $G(D)$.
\end{proof}

By construction of $\gamma'$, each crossing of $D$
is cut into two arcs, an over–arc and an under–arc,
and these arcs lie entirely on one side of $\gamma'$.
Moreover, $\gamma'$ meets $D$ in exactly $2c(D)$ points,
namely at the chosen points $m(e)$, one on each edge $e$ of $G(D)$.
Thus $\gamma'$ satisfies Condition~(1) of Definition~\ref{def:binding}
of a binding circle for a circular four-page presentation.

Observe that $\gamma'$ is obtained from the state circle $C_T$
by a planar isotopy which replaces each subarc of $C_T$
between successive points $m(e_i)$ and $m(e_{i+1})$
with the corresponding connecting arc.
In particular, $\gamma'$ is isotopic to $C_T$
in the complement of $G(D)$,
and hence separates the plane into the same two regions.
The next lemma shows that $\gamma'$ also satisfies
Condition~(2) of Definition~\ref{def:binding}.

\begin{lemma}\label{lem:inside_outside_crossings}
Both the interior and the exterior of $\gamma'$ contain
at least one crossing of $D$.
\end{lemma}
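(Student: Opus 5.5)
We first make precise which crossings lie on which side of $\gamma'$. Since $\gamma'$ is isotopic to the single state circle $C_T$ in the complement of $G(D)$, and $\gamma'$ passes through every edge midpoint, each crossing $v$ of $D$ lies in a small disk that meets $\gamma'$ only in the two connecting arcs passing through the two faces at $v$ that are \emph{not} joined by the smoothing at $v$. Hence $v$ lies on one well-defined side of $\gamma'$. Concretely, near $v$ the smoothing of $S_T$ joins two opposite corner faces, and $v$ lies on the side of $C_T$ containing the "bridge" region of the smoothing, namely the region containing the two joined corners. The plan is to encode this side in terms of the checkerboard coloring. At a crossing whose dual edge lies in $T$, the smoothing connects the two shaded regions incident to $v$, so $v$ lies in the shaded-side region of $C_T$. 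At a crossing whose dual edge lies outside $T$, the smoothing connects the two unshaded regions, so $v$ lies on the unshaded side.

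Next we identify the two sides of $C_T$ globally. Using the description in the proof of Lemma~\ref{lem:singlecircle}, $C_T$ is the boundary of a regular neighborhood $N$ of the union of the shaded faces glued along the bands at the crossings in $T$. Since $T$ is a tree, $N$ is a disk, so one complementary region of $C_T$ is $N$ and the other is $S^2\setminus N$. The crossings of $T$ lie in $N$. The crossings outside $T$ lie in $S^2\setminus N$, which is the regular neighborhood of the unshaded faces joined along the complementary edges; equivalently, the dual spanning tree $T^*$ of the unshaded checkerboard graph $\Gamma^*(D)$.

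It remains to show that both sets are nonempty. Because $D$ is non-split and nontrivial, $c(D)\ge 1$. The goal is to show that $T$ has at least one edge and that $\Gamma(D)\setminus T$ is nonempty. For the first, $\Gamma(D)$ needs at least two vertices. If there were only one shaded region, every crossing would be a loop in $\Gamma(D)$, that is, a crossing meeting the same shaded region twice. Such a crossing is nugatory, contradicting reducedness. For the second, apply the same argument to the unshaded graph. The edges outside $T$ form the dual tree $T^*$ of $\Gamma^*(D)$, and $\Gamma^*(D)$ also has at least two vertices, since otherwise every crossing would be a loop in $\Gamma^*(D)$ and hence nugatory. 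So $T^*$ has at least one edge, which means some crossing lies outside $T$.

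The main obstacle is the first step. There we must carefully justify, from the local picture at a crossing, which side of $\gamma'$ the crossing lies on, and that this is consistent with the global inside/outside decomposition, including the possibility that the unbounded face lies inside $N$. We will handle this by working on $S^2$, where "interior" and "exterior" are simply the two disks bounded by $\gamma'$. Once the correspondence between crossings in $T$ and $N$, and crossings outside $T$ and the complement of $N$, is established, the lemma follows from the reducedness argument above.
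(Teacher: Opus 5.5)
Your proposal is correct and follows the paper's proof. In both, crossings of $T$ and crossings of $\Gamma(D)\setminus T$ lie on opposite sides of $C_T$, and hence of $\gamma'$, and reducedness makes both sets nonempty. The differences are small: you obtain $\Gamma(D)\setminus T\neq\emptyset$ from the absence of loops in the dual checkerboard graph, where the paper uses the equivalent fact that $\Gamma(D)$ has no cut edges and so is not a tree; and you justify that $T$ has an edge (no loops in $\Gamma(D)$) and check the side-separation locally, two points the paper asserts without comment.
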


\begin{proof}
Since $D$ is non-split, the dual graph $\Gamma(D)$ is connected.
Because $D$ is reduced, $\Gamma(D)$ has no cut edges.
A connected graph without cut edges cannot be a tree.
Hence $\Gamma(D)$ is not a tree, and for any spanning tree
$T \subset \Gamma(D)$ there exists at least one edge of $\Gamma(D)$
not contained in $T$.

By construction of the state $S(T)$,
crossings corresponding to edges of $T$
lie on one side of the state circle $C_T$,
while crossings corresponding to edges
in $\Gamma(D)\setminus T$
lie on the other side.
Since $\gamma'$ is isotopic to $C_T$,
the same separation property holds for $\gamma'$.

Because $T$ contains at least one edge,
the interior of $\gamma'$ contains at least one crossing.
Because $\Gamma(D)\setminus T$ is nonempty,
the exterior also contains at least one crossing.
\end{proof}

\begin{figure}[h!]
	\includegraphics{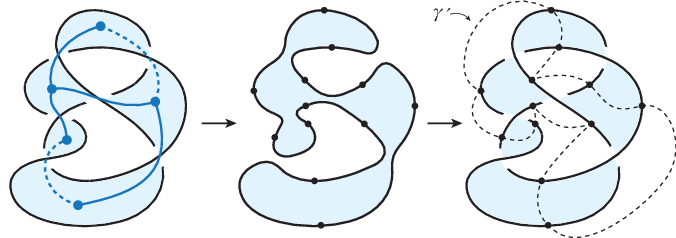}
	\caption{A binding circle for circular four-page presentation}
	\label{fig:middle_step}
\end{figure}

The construction described above is summarized in Figure~\ref{fig:middle_step}.
If the diagram $D$ is alternating, then along the simple closed curve $\gamma'$
the over- and under-passes necessarily occur in an alternating fashion.
Consequently, $\gamma'$ satisfies Condition~(3) of Definition~\ref{def:binding}.
In this case, the curve $\gamma'$ satisfies all the conditions of
Definition~\ref{def:binding}, and hence defines a binding circle.
For a non-alternating diagram, however, this adjacency condition may fail,
and an additional local modification is required.
We therefore prove the following lemma.

\begin{lemma}\label{lem:local_fix_nonalt}
Let $D$ be a link diagram and let $\gamma'$ be the closed curve constructed from
the Eulerian tour as above.
Suppose that an edge $e_i$ of $G(D)$ has the property that its two endpoint crossings
are non-alternating, and that both of these crossings lie on the same side of $\gamma'$
(either both in the interior or both in the exterior).
Then $\gamma'$ may be modified by a planar isotopy supported in a small neighborhood of
$e_i$ so that the modified curve $\widetilde{\gamma}$ is disjoint from $m(e_i)$
and still satisfies Condition~(1) and~(2) of Definition~\ref{def:binding}.
\end{lemma}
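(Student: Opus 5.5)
Let $c_1,c_2$ be the two endpoint crossings of $e_i$, and assume without loss of generality that both lie in the interior of $\gamma'$; the exterior case is symmetric. "Non-alternating" means that $e_i$ is an over-strand at both $c_1$ and $c_2$, or an under-strand at both. Near $m(e_i)$ the curve $\gamma'$ crosses $e_i$ transversally. It comes in from a face $F$ on one side of $e_i$ and leaves into a face $F'$ on the other side. Just before and after, $\gamma'$ runs as two short connecting arcs inside $F$ and $F'$, near the boundary of those faces.

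The plan is to perform a finger move that pushes the subarc of $\gamma'$ through $m(e_i)$ off the edge. Rather than crossing $e_i$ at $m(e_i)$, the modified curve $\widetilde\gamma$ travels around one of the endpoints of $e_i$. More precisely, both $c_1,c_2$ lie inside $\gamma'$. So the whole edge $e_i$, except a small neighbourhood of $m(e_i)$, lies on the interior side, and $\gamma'$ passes through $e_i$ only in a narrow strip. We cut this strip and reconnect the two pieces of $\gamma'$ on the interior side of $e_i$ so that $e_i$ is entirely enclosed in the interior. Equivalently, we perform a planar isotopy of the binding circle, supported in a small disk neighbourhood $N$ of $e_i$, that slides the crossing point off $e_i$ into the region outside $D$ near $m(e_i)$. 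Because $\gamma'$ meets $N\cap D$ only at $m(e_i)$, this isotopy removes exactly one intersection point with $D$ and creates none.

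Next I would check the conditions of Definition~\ref{def:binding}. Condition~(1) is immediate, since the intersections with $D$ are those of $\gamma'$ minus $m(e_i)$, a finite set. For Condition~(2), the two arcs of $D$ that were cut at $m(e_i)$ are now merged into a single arc. This arc lies inside $\widetilde\gamma$, because it consists of the two old inside arcs joined along $e_i$. Its crossings are those of the two old arcs. By the non-alternating hypothesis, $e_i$ has the same over/under status at $c_1$ and $c_2$, so the two old arcs were both of type~(a) or both of type~(b). Hence the merged arc is again of a single type. All other arcs are unchanged.

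The main obstacle is justifying that the reconnection is a planar isotopy that keeps $\widetilde\gamma$ simple and does not sweep across any other part of $D$ or of $\gamma'$. Here I would use the construction in Lemma~\ref{lem:simple}: the connecting arcs hug face boundaries. The two connecting arcs adjacent to $m(e_i)$ lie in $F$ and $F'$, and the move only reroutes them within $N$. So I would take $N$ thin enough to meet $\gamma'$ only in those two arcs, and realize the move as a bigon removal across $e_i$. I would also remark that Condition~(3) is not claimed, since merging may create adjacency of like types elsewhere. That issue is handled afterwards, when the lemma is applied repeatedly.
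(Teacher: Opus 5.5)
Your treatment of Condition~(2) is correct. It is in fact more careful than the paper's, which says every arc keeps its type without noting that the two arcs meeting at $m(e_i)$ merge, or where the non-alternating hypothesis is used.

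The gap is in the geometric step. You assert that $\gamma'$ crosses $e_i$ transversally at $m(e_i)$, passing from a face $F$ on one side to a face $F'$ on the other. This contradicts the hypothesis. Since $\gamma'$ meets $e_i$ only at $m(e_i)$, a transverse intersection there would put the half-edges $[c_1,m(e_i))$ and $(m(e_i),c_2]$ in different components of the complement of $\gamma'$ (Jordan curve theorem), i.e.\ $c_1$ and $c_2$ would lie on opposite sides. The same parity fact shows that your move cannot do what you claim in that picture:
\begin{itemize}
\item An isotopy supported near $e_i$ that keeps $\gamma'$ off $c_1,c_2$ preserves $|\gamma'\cap e_i|$ mod~$2$.
\item Routing $\widetilde\gamma$ ``around one of the endpoints'' either crosses the three other edges at that crossing, creating new binding points, or sweeps the crossing to the other side, changing arc types.
\item A bigon removal eliminates two intersection points, not one, and there is no bigon when $\gamma'$ meets $e_i$ once transversally.
\end{itemize}
Your own sentence that all of $e_i$ away from $m(e_i)$ lies on the interior side is already inconsistent with the transverse picture.

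What is missing is the correct local picture. The paper uses it implicitly when it replaces $\gamma'\cap N$ by an arc of $N\setminus D$ with the same endpoints, which is possible only if both endpoints lie on the same side of $e_i$. The argument runs as follows.
\begin{itemize}
\item Each of the two connecting arcs of $\gamma'$ ending at $m(e_i)$ lies in one of the two faces adjacent to $e_i$. These faces are distinct, since a $4$-valent graph has no bridges.
\item If the two arcs lay in different faces, the intersection at $m(e_i)$ would be transverse. So under the hypothesis, both lie in the same face $F$.
\item In terms of the construction, the corners where $C_T$ turns off $e_i$ at $c_1$ and at $c_2$ are on the same side of $e_i$. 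Smoothings at crossings in $T$ and outside $T$ cut corners of opposite colours, so this happens exactly when both crossings are in $T$ or both are outside $T$.
\end{itemize}
Thus $\gamma'$ only touches $e_i$ at $m(e_i)$, from $F$. The curve $\widetilde\gamma$ is obtained by pushing $\gamma'$ slightly off $e_i$ into $F$, inside $N\setminus D$. This removes $m(e_i)$ and creates no new intersection, and your merging argument then gives Condition~(2).
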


\begin{proof}
Let $N$ be a sufficiently small closed regular neighborhood of the edge $e_i$ in the plane.
Choose $N$ so that it meets $D$ exactly in the portion of $D$ consisting of $e_i$
together with small subarcs near its two endpoint crossings, and so that $N$ contains
no other crossings or edges of $G(D)$.
By construction, $\gamma'$ meets $e_i$ only at the chosen point $m(e_i)$.
After possibly shrinking $N$, we may assume that $\gamma' \cap N$ is a single embedded arc
meeting $e_i$ at $m(e_i)$ and connecting two points on $\partial N$.

Inside $N$, replace the subarc $\gamma' \cap N$ by another embedded arc in $N$
with the same endpoints on $\partial N$ but running along $\partial N$,
so that it avoids the point $m(e_i)$ and stays in $N\setminus D$ except possibly
for transverse intersections with the same edges of $G(D)$ as before.
Denote the resulting closed curve by $\widetilde{\gamma}$.
Outside $N$ we leave $\gamma'$ unchanged.

By the choice of $N$, the modification does not create any new intersections of
$\widetilde{\gamma}$ with $D$ outside $N$, and within $N$ the replacement arc is chosen
to remain in the complement of $D$.
Hence $\widetilde{\gamma}$ still intersects $D$ in finitely many points, so it satisfies
Condition~(1) of Definition~\ref{def:binding}.
Moreover, because the modification is supported in a neighborhood containing only the
local portion of $D$ near $e_i$, each arc of $D$ cut by $\widetilde{\gamma}$ is of the
same type as before (inside/outside and over/under behavior along that arc is unchanged).
In particular, Condition~(2) of Definition~\ref{def:binding} is preserved.

Therefore $\widetilde{\gamma}$ satisfies Conditions~(1) and~(2) of
Definition~\ref{def:binding}, as claimed.
\end{proof}

Applying Lemma~\ref{lem:local_fix_nonalt} repeatedly, we conclude that even in the non-alternating case the curve can be modified so as to satisfy Condition~(3) of Definition~\ref{def:binding} as drawn in Figure~\ref{fig:final_step}.

\begin{figure}[h!]
	\includegraphics{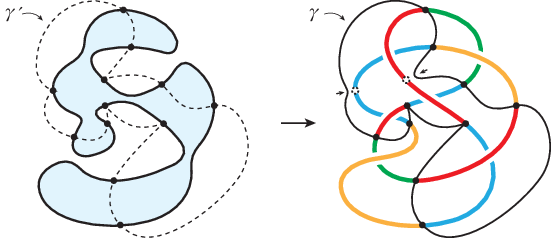}
	\caption{A binding circle for circular four-page presentation}
	\label{fig:final_step}
\end{figure}

We are now ready to complete the proof of Theorem~\ref{thm:alpha4_bound}.

\begin{proof}[Proof of Theorem~\ref{thm:alpha4_bound}]
Let $D$ be a non-split reduced diagram of a nontrivial knot or link $K$
with $c(D)$ crossings.
By the construction above, we obtain a simple closed curve $\gamma$
which serves as a binding circle for a circular four-page presentation of $D$.
Moreover, $\gamma$ meets each edge of the underlying graph $G(D)$
at most once.
Since $G(D)$ has $2c(D)$ edges, the number of binding points is at most $2c(D)$.
Hence the associated four-page presentation has at most $2c(D)$ arcs, and therefore
$$
\alpha_4(K) \le 2c(D).
$$
Taking the minimum over all diagrams of $K$ yields
$$
\alpha_4(K) \le 2c(K).
$$

If $D$ is non-alternating, then there exists at least one edge of $G(D)$ whose two endpoint crossings are non-alternating.
Choose a spanning tree $T \subset \Gamma(D)$ so that the edge of $\Gamma(D)$ corresponding to this crossing configuration is not contained in $T$ (equivalently, the two endpoint crossings lie on the same side of the state circle determined by $T$).
With this choice of spanning tree, the resulting curve $\gamma'$ satisfies the hypotheses of Lemma~\ref{lem:local_fix_nonalt}.
Applying that lemma, we may locally modify $\gamma'$ near the midpoint of the corresponding edge so as to remove one binding point while preserving Conditions~(1) and~(2) of Definition~\ref{def:binding}.
Consequently, in the non-alternating case the number of binding points can be reduced by at least one, while maintaining the defining conditions of a binding circle.
In this case we obtain
$$
\alpha_4(K) < 2c(K).
$$
This completes the proof.
\end{proof}

Theorem~\ref{thm:alpha4_bound} provides a linear upper bound with coefficient $2$.
We now show that this coefficient is sharp for certain links.

\begin{corollary}
$\alpha_4(2^2_1)=4$ and $\alpha_4(3_1)=6$.
\end{corollary}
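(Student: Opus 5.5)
The upper bounds $\alpha_4(2^2_1)\le 4$ and $\alpha_4(3_1)\le 6$ come from Theorem~\ref{thm:alpha4_bound}, since $c(2^2_1)=2$ and $c(3_1)=3$. They can also be read off directly from the four-page presentations in Figure~\ref{fig:four}. So the work is in the lower bounds $\alpha_4(2^2_1)\ge 4$ and $\alpha_4(3_1)\ge 6$.

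For these I will use the inequality $\alpha(K)\le\alpha_4(K)$ from Section~\ref{sec:four}, which came from spreading the arcs of a four-page presentation into distinct pages. The known arc indices are $\alpha(2^2_1)=4$ and $\alpha(3_1)=5$. Cromwell's classical result $\alpha(K)=c(K)+2$ for non-split alternating links gives these values. The Hopf link case is then done.

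For the trefoil the arc index only gives $\alpha_4(3_1)\ge 5$, so I still need to exclude a four-page presentation with exactly $5$ arcs. I will count binding points per page. Each arc contributes two endpoints on the binding, and each binding point is shared by exactly two arcs. Each page of a presentation of a nontrivial knot carries at least the bridge number of arcs, in the same way as the three-page statement of~\cite{JLY}. This holds because the union of the arcs in any two opposite half-planes yields a bridge-type decomposition. Since $b(3_1)=2$, I will first check whether this already gives at least $2$ arcs per page, hence at least $8$ arcs. That would contradict the upper bound of $6$, so the per-page claim cannot hold in that form and must be refined.

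The argument I will actually run is a case analysis on the distribution of the $5$ arcs among the four cyclically ordered pages. Five arcs give $5$ binding points. Ordering the pages $P_1,\dots,P_4$ cyclically and contracting the binding points, I will read off a braid-like or grid-like picture. The possible distributions are $(2,1,1,1)$ and $(2,2,1,0)$ up to symmetry. A distribution with an empty page reduces to a three-page presentation with $5$ arcs. That case is excluded because the known value $\alpha_3(3_1)=6$ from~\cite{JLY} exceeds $5$. In the remaining $(2,1,1,1)$ case, pages with a single arc can be slid: a single arc on a page can be isotoped into an adjacent page without creating intersections. This reduces again to a three-page presentation with $5$ arcs, which is impossible.

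The main obstacle is justifying this merging step rigorously. The single arc must remain disjoint from the arcs already on the adjacent page, and nesting of endpoints along the binding could prevent this. If direct merging fails, I will fall back on enumerating the finitely many $5$-point arc configurations, checking the parity and nesting constraints, and verifying that each one gives the unknot.
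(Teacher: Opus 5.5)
\textbf{Gap in the $(2,1,1,1)$ case.} Your Hopf link argument and the bounds $5\le\alpha_4(3_1)\le 6$ match the paper. To exclude $5$ arcs, the paper stays in the circular picture, using Lemma~\ref{lem:inside_outside_crossings} and Condition~(3) of Definition~\ref{def:binding}. You instead reduce to three-page presentations and use the bound of at least $b(K)$ arcs per page. The reduction is fine when a page is empty. By definition exactly four pages meet $K$, so $(2,2,1,0)$ does not actually occur; only $(2,1,1,1)$ does.

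In the $(2,1,1,1)$ case, however, everything rests on the claim that a single-arc page can be rotated into an adjacent page, and you give no reason for it. As a general principle it is false. An arc can be rotated into a neighbouring page only if its chord on the binding circle does not interleave any chord already on that page. Nesting is harmless; interleaving is the real obstruction. A single arc whose chord crosses chords on both neighbouring pages cannot be moved either way. Your fallback enumeration is not carried out, so the bound $\alpha_4(3_1)\ge 6$ is not yet proved.

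\textbf{The missing idea.} You need a counting fact that uses the presence of only five binding points. The two arcs on the doubled page have four distinct endpoints $a,b,c,d$, in this cyclic order on the binding circle, with chords $\{a,b\}$ and $\{c,d\}$. The fifth binding point lies in exactly one of the four open segments cut out by $a,b,c,d$. Hence at least one of the segments between $a$ and $b$, or between $c$ and $d$, contains no binding point. So one of the two arcs joins binding points that are adjacent on the binding circle, and such an arc crosses no chord at all.

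With this, a short case check shows that some single arc can always be slid, which rescues your route. It is quicker, though, to finish directly:
\begin{enumerate}
\item Spread the presentation into a $5$-arc arc presentation as in Section~\ref{sec:four}.
\item The adjacent-endpoint arc is now alone in its page. Together with the link-free binding segment between its endpoints, it bounds a disk meeting $K$ only in that arc.
\item Slide one endpoint along this segment onto the other. The remaining arc at the moving endpoint is also alone in its page, so this is an isotopy. It deletes the arc and merges the two binding points.
\end{enumerate}
The result is an arc presentation of $3_1$ with $4$ arcs, contradicting $\alpha(3_1)=5$.
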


\begin{proof}
For the Hopf link $2^2_1$, we have $\alpha(2^2_1)=4$ and $c(2^2_1)=2$.
Since
$$
\alpha(K) \le \alpha_4(K) \le 2c(K),
$$
it follows that
$$
4 \le \alpha_4(2^2_1) \le 4,
$$
and hence $\alpha_4(2^2_1)=4$.

For the trefoil knot $3_1$, we have $\alpha(3_1)=5$ and $c(3_1)=3$.
Thus
$$
5 \le \alpha_4(3_1) \le 6.
$$

Suppose, for contradiction, that $\alpha_4(3_1)=5$.
Then there exists a circular four-page presentation of $3_1$
with exactly five binding points.

By Lemma~\ref{lem:inside_outside_crossings},
both the interior and the exterior of the binding circle
must contain at least one crossing of the diagram.
Since the trefoil knot has three crossings,
at least one of the regions determined by the binding circle
must contain at least two crossings.

However, by Condition~(3) of Definition~\ref{def:binding},
no two arcs of the same type may be adjacent along the binding circle.
To separate these crossings into distinct arc types
while preserving the alternating pattern along the circle,
at least six binding points are required.
Hence a circular four-page presentation with only five binding points
cannot exist.
Therefore $\alpha_4(3_1)=6$.
\end{proof}

\section*{Acknowledgements}
This study was supported by Basic Science Research Program of the National Research Foundation of Korea (NRF) grant funded by the Korea government Ministry of Education (RS-2023-00244488).

\bibliography{ribgen.bib} 
\bibliographystyle{abbrv}

\end{document}